\theoremstyle{plain}
\newtheorem{theorem}{Theorem}[section]
\newtheorem{definition}[theorem]{Definition}
\newtheorem{lemma}[theorem]{Lemma}
\newtheorem{proposition}[theorem]{Proposition}
\newtheorem{corollary}[theorem]{Corollary}
\newtheorem{example}[theorem]{Example}
\newtheorem{remark}[theorem]{Remark}
\numberwithin{equation}{section}
\numberwithin{figure}{section}
\newcommand\PP{{\mathbb P}}
\newcommand\eps{\epsilon}
\renewcommand\ell{l}
\title[Recursive supercritical bounds for site percolation]{%
Recursive Packing Bounds for Supercritical Disconnection in Bernoulli Site Percolation%
}
\author{Zhongyang Li}
\address{Department of Mathematics,
University of Connecticut,
Storrs, Connecticut 06269-3009, USA}
\email{zhongyang.li@uconn.edu}
\urladdr{\url{https://mathzhongyangli.wordpress.com}}
\begin{document}
\maketitle

\begin{abstract}
For Bernoulli site percolation on an infinite, connected, locally finite graph
$G=(V,E)$, we obtain quantitative upper bounds on the supercritical disconnection
probability
\[
\mathbb{P}_p(S\nleftrightarrow\infty)
\]
for arbitrary finite or infinite sets $S\subset V$ and all $p>p^{\mathrm{site}}_c(G)$.

The key quantity is a recursive packing number $\mathbf{PK}_{p,\eps,c}(S)$. It is the
maximal number of vertices that can be extracted from $S$ so that, after deleting witness
balls around the previously chosen vertices, each selected vertex still connects to infinity
with probability at least $c$, while its failure to connect to infinity is already detected,
up to a factor $1+\eps$, by failure to reach the inner boundary of its witness ball. Thus
$\mathbf{PK}_{p,\eps,c}(S)$ counts essentially independent local witnesses for the global
event $\{S\nleftrightarrow\infty\}$.

We prove the structural estimate
\[
\mathbb{P}_p(S\nleftrightarrow\infty)
\le
\frac{\eps(1-c)}{c}
+(1-c)^{\mathbf{PK}_{p,\eps,c}(S)}.
\]
Combining this bound with the local functional characterization of $p^{\mathrm{site}}_c(G)$
from \cite{ZL24} yields an explicit supercritical estimate valid on every infinite,
connected, locally finite graph.

We also illustrate the packing number on ray-homogeneous trees. In particular, sparse
finite subsets of a distinguished ray have packing number equal to their cardinality, both
for regular trees and for a non-regular decorated spine. This shows that the packing
number is explicit on concrete graph families.
\end{abstract}

\section{Introduction}

\subsection{Model and question}

Let $G=(V,E)$ be an infinite, connected, locally finite graph. In Bernoulli site
percolation with parameter $p\in(0,1)$, each vertex is declared open with probability $p$
independently of all others, and closed otherwise; we write $\mathbb{P}_p$ for the
resulting product measure. For a vertex $v\in V$, we write
\[
\{v\leftrightarrow\infty\}
\]
for the event that $v$ belongs to an infinite open cluster.

The critical site percolation threshold is
\[
p^{\mathrm{site}}_c(G)
:=
\sup\Bigl\{
p\in[0,1]:
\forall v\in V,\ \mathbb{P}_p(v\leftrightarrow\infty)=0
\Bigr\}.
\]
Fix a finite or infinite set $S\subset V$. We write
\[
\{S\nleftrightarrow\infty\}
:=
\bigcap_{v\in S}\{v\nleftrightarrow\infty\}.
\]
The purpose of this paper is to obtain quantitative upper bounds on
\[
\mathbb{P}_p(S\nleftrightarrow\infty)
\]
throughout the supercritical regime $p>p^{\mathrm{site}}_c(G)$.

\subsection{A packing bound for supercritical disconnection}

Our main tool is a recursive packing number. Given $p,\eps,c\in(0,1)$ and $S\subset V$,
the quantity $\mathbf{PK}_{p,\eps,c}(S)$ is the maximal number of vertices that can be
selected recursively from $S$ so that, after removing witness balls around the previously
chosen vertices, each selected vertex still connects to infinity with probability at least
$c$, and the event that it does not connect to infinity is already captured, up to a factor
$1+\eps$, by the event that it does not reach the inner boundary of its witness ball; see
Definition~\ref{df33} below.

In this language, the heart of the paper is the following packing bound.

\begin{theorem}[Packing bound for disconnection]
\label{thm:main-intro}
Let $G=(V,E)$ be an infinite, connected, locally finite graph. Let $p,\eps,c\in(0,1)$ and
let $S\subset V$. Then
\begin{equation}
\label{eq:intro-structural}
\mathbb{P}_p(S\nleftrightarrow\infty)
\le
\frac{\eps(1-c)}{c}
+(1-c)^{\mathbf{PK}_{p,\eps,c}(S)}.
\end{equation}
\end{theorem}

Thus each additional admissible packing center contributes one more factor $(1-c)$.

The explicit supercritical bound proved later is obtained by choosing $c$ through the local
functional characterization of $p^{\mathrm{site}}_c(G)$. More precisely, if
$p>p^{\mathrm{site}}_c(G)$, $p_1\in(p^{\mathrm{site}}_c(G),p)$, and $\eps,\delta\in(0,1)$,
then one may take
\begin{align}
c
=
1-\Bigl(\frac{1-p}{1-p_1}\Bigr)^{1-\eps},\label{chc}
\end{align}
which yields
\begin{equation}
\label{eq:intro-explicit}
\mathbb{P}_p(S\nleftrightarrow\infty)
\le
\frac{\delta\left(\frac{1-p}{1-p_1}\right)^{1-\eps}}
{1-\left(\frac{1-p}{1-p_1}\right)^{1-\eps}}
+\left(\frac{1-p}{1-p_1}\right)^{
(1-\eps)\mathbf{PK}_{p,\delta,\,
1-\left(\frac{1-p}{1-p_1}\right)^{1-\eps}}(S)
}.
\end{equation}

The novelty of the present paper is the recursive packing/amplification
principle that turns the one-point supercritical information in \cite{ZL231} into a many-point
disconnection estimate.

\subsection{What does the packing number count?}

The packing number is not an arbitrary geometric packing parameter. For
\[
H_c:=\{v\in V:\mathbb{P}_p(v\leftrightarrow\infty)\ge c\},
\]
Proposition~\ref{p24} shows that
\[
\mathbf{1}_{\{S\cap H_c\neq\varnothing\}}
\le
\mathbf{PK}_{p,\eps,c}(S)
\le
|S\cap H_c|.
\]
Hence $\mathbf{PK}_{p,\eps,c}(S)$ measures how many uniformly supercritical vertices in
$S$ can be organized into an essentially independent family of local witnesses for the
global event $\{S\nleftrightarrow\infty\}$.
Moreover, when $c$ is chosen by (\ref{chc}), it was proved by Lemma 3.3 in \cite{ZL231} that $|H_c|=\infty$. 

This interpretation is also the reason for the recursive nature of the definition. The witness
balls are chosen one after another, and at each step the relevant connectivity event is
measured in the graph punctured by the previously used balls. This allows us to exploit
independence between the local boundary-disconnection events while retaining a uniform
positive probability of connection to infinity at every stage.

\subsection{Method and relation to previous work}

Quantitative supercritical bounds of the form
\[
\mathbb{P}_p(S\nleftrightarrow\infty)\le e^{-f(S)}
\]
have been obtained in several settings under additional geometric assumptions; see in
particular \cite{DC20,EST25}. Our result is complementary: it applies to every infinite,
connected, locally finite graph, and packages the geometry into the graph-dependent
packing number $\mathbf{PK}_{p,\eps,c}(S)$.

The proof combines two ingredients. The first is the local functional
$\varphi_p^v(S)$ from \cite{ZL24}, which characterizes $p^{\mathrm{site}}_c(G)$ through
finite-volume connectivity data and furnishes vertices with a quantitative lower bound on
their supercritical connection probability. The second is an amplification step: by
selecting suitable packing centers and comparing disconnection from infinity with
disconnection from a local inner boundary, one turns a uniform lower bound on one-point
connection probabilities into the exponential estimate \eqref{eq:intro-structural}.

A related functional/cutset amplification mechanism also appears in \cite{ZL26} in the
context of planar site percolation and uniqueness.

\subsection{Examples and organization}

To show that the packing quantity is concrete, and not merely formal, we study it on
ray-homogeneous trees. In that setting one can compute the packing number exactly on
explicit sparse subsets of a distinguished ray: if the points are sufficiently separated, then
the packing number equals the cardinality of the set. This applies both to regular trees and
to a non-regular decorated spine. In particular, the method is not restricted to transitive or
quasi-transitive graphs.

Section~\ref{sec2} recalls the functional $\varphi_p^v(S)$ from \cite{ZL24}, proves the
packing bound \eqref{eq:intro-structural}, and derives the explicit supercritical estimate
\eqref{eq:intro-explicit}. Section~\ref{sec3} is devoted to examples on ray-homogeneous trees,
where the packing number can be computed exactly on suitable finite sets.

\section{Functional characterization of $p_c^{\mathrm{site}}$ and packing-based supercritical
disconnection bounds}\label{sec2}

This section recalls the local functional $\varphi_p^v(S)$ from \cite{ZL24}, introduces the
recursive packing number, and proves the packing bound of Theorem~\ref{thm:main-intro}.
We then derive the explicit supercritical estimate stated in Corollary~\ref{cor:explicit-main}.

\subsection{Definition of the $\varphi_p^v(S)$-functional}

Let $G=(V,E)$ be a graph. For each $p\in (0,1)$, let $\mathbb{P}_p$ be the probability measure of the i.i.d.~Bernoulli($p$) site percolation on $G$.
For each $S\subset V$, let $S^{\circ}$ consist of all the interior vertices of $S$, i.e., vertices all of whose neighbors are in $S$ as well.
For each  $S\subseteq V$, $v\in S$, define
\begin{align}
\varphi_p^{v}(S):=\begin{cases}\sum_{y\in S:[\partial_V y]\cap S^c\neq\emptyset}\mathbb{P}_p(v\xleftrightarrow{S^{\circ}} \partial_V y)&\mathrm{if}\ v\in S^{\circ}\\
1&\mathrm{if}\ v\in S\setminus S^{\circ}
\end{cases}\label{dpv}
\end{align}
where 
\begin{itemize}
\item $v\xleftrightarrow{S^{\circ}} x$ is the event that the vertex $v$ is joined to the vertex $x$ by an open path visiting only interior vertices in $S$;
\item let $A\subseteq V$; $v\xleftrightarrow{S^{\circ}} A$ if and only if there exists $x\in A$ such that $v\xleftrightarrow{S^{\circ}} x$;
\item $\partial_V y$ consists of all the vertices adjacent to $y$.
\end{itemize}

\subsection{Characterization theorem from \cite{ZL24}}

The following technical lemmas were proved in \cite{ZL24}; see also \cite{ZL231}.

\begin{lemma}(Lemma 2.5 in \cite{ZL24})\label{l71} Let $G=(V,E)$ be an infinite, connected, locally finite graph. The critical site percolation probability on $G$ is given by
\begin{align*}
    \tilde{p}_c=\sup\{p\geq 0:\exists \epsilon_0>0, \mathrm{s.t.}\forall v\in V, \exists S_v\subseteq V\ \mathrm{satisfying}\ |S_v|<\infty\ \mathrm{and}\ v\in S_v^{\circ}, \varphi_p^{v}(S_v)\leq 1-\epsilon_0\}
\end{align*}
Moreover,
\begin{enumerate}
    \item If $p>\tilde{p}_c$, a.s.~there exists an infinite 1-cluster; moreover, for any $\epsilon>0$ there exists a vertex $w$,  such that 
    \begin{align}
    \forall S_w\subseteq V\ \mathrm{satisfying}\ |S_w|<\infty\ \mathrm{and}\ w\in S_{w}^{\circ},\ \varphi_q^v(S_w)> 1-\epsilon_1\label{wc1};\ \forall q\geq p_1
    \end{align}
    where $p_1,\epsilon_1$ are such that 
    \begin{align}
    p_1\in (\tilde{p}_c,p);\ \epsilon_1\in(0,\epsilon);\ \left(\frac{1-p}{1-p_1}\right)^{1-\epsilon_1}<\left(\frac{1-p}{1-\tilde{p}_c}\right)^{1-\epsilon}.\label{pepe}
    \end{align}
    Any vertex $w$ satisfying (\ref{wc1}) also satisfies
    \begin{align}
    \mathbb{P}_p(w\leftrightarrow \infty)\geq 1-\left(\frac{1-p}{1-p_1}\right)^{1-\epsilon_1}\geq 1-\left(\frac{1-p}{1-\tilde{p}_c}\right)^{1-\epsilon}.\label{lbc}
    \end{align}
    \item If $p<\tilde{p}_c$, then for any vertex $v\in V$
    \begin{align}
        \mathbb{P}_p(v\leftrightarrow \infty)=0.\label{lbc2}
    \end{align}
\end{enumerate}
In particular, (1) and (2) imply that $p_c^{site}(G)=\tilde{p}_c$.
\end{lemma}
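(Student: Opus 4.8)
The plan is to prove the two one-sided implications that the statement really packages, since $\tilde p_c$ is \emph{defined} as the displayed supremum and the content lies in claims (1) and (2) together with the identification $p_c^{\mathrm{site}}(G)=\tilde p_c$. Two preliminary observations drive everything. First, $\varphi^v_p(S)$ is nondecreasing in $p$ for fixed $v,S$ (opening more sites only creates more open paths), so the property ``$\exists\,\epsilon_0>0$ such that every $v$ has a finite $S_v\ni v$ with $v\in S_v^\circ$ and $\varphi^v_p(S_v)\le 1-\epsilon_0$'' is downward closed in $p$; hence $\tilde p_c$ is a genuine threshold, the property holds for $p<\tilde p_c$, and its negation holds for $p>\tilde p_c$. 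Second, by monotone coupling the set $\{p:\forall v,\ \mathbb P_p(v\leftrightarrow\infty)=0\}$ is also downward closed, so once (1) and (2) are in hand the equality $p_c^{\mathrm{site}}(G)=\tilde p_c$ is immediate.

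For the subcritical side (statement \eqref{lbc2}) I would fix $p<\tilde p_c$ together with $\epsilon_0>0$ and sets $S_v$ as above, and put $M:=\sup_{v}\mathbb P_p(v\leftrightarrow\infty)\in[0,1]$. On $\{v\leftrightarrow\infty\}$ choose a self-avoiding open path from $v$ to $\infty$ and let $y$ be its first vertex outside $S_v^\circ$; since $v\in S_v^\circ$ and interior vertices have all neighbours inside $S_v$, one gets $y\in S_v\setminus S_v^\circ$ and the predecessor of $y$ on the path lies in $\partial_V y\cap S_v^\circ$, so the initial and terminal sub-paths witness, on disjoint vertex sets, the events $\{v\xleftrightarrow{S_v^\circ}\partial_V y\}$ and $\{y\leftrightarrow\infty\}$. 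Hence $\{v\leftrightarrow\infty\}\subseteq\bigcup_{y\in S_v\setminus S_v^\circ}\bigl(\{v\xleftrightarrow{S_v^\circ}\partial_V y\}\circ\{y\leftrightarrow\infty\}\bigr)$, and the van den Berg--Kesten inequality for site percolation together with $\mathbb P_p(y\leftrightarrow\infty)\le M$ gives $\mathbb P_p(v\leftrightarrow\infty)\le M\,\varphi^v_p(S_v)\le M(1-\epsilon_0)$. Taking the supremum over $v$ forces $M=0$, which is \eqref{lbc2}; in particular $p_c^{\mathrm{site}}(G)\ge\tilde p_c$.

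For the supercritical side (statement \eqref{wc1}--\eqref{lbc}) I would fix $\epsilon>0$ and choose $p_1\in(\tilde p_c,p)$, $\epsilon_1\in(0,\epsilon)$ satisfying \eqref{pepe}, which is possible by continuity, letting $p_1\downarrow\tilde p_c$ after fixing $\epsilon_1<\epsilon$. Since $p_1>\tilde p_c$ the above property fails at level $p_1$ with $\epsilon_0=\epsilon_1$, producing a vertex $w$ with $\varphi^w_{p_1}(S)>1-\epsilon_1$ for every finite $S\ni w$ with $w\in S^\circ$; monotonicity in the parameter upgrades this to all $q\ge p_1$, giving \eqref{wc1}. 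For \eqref{lbc}, write $\theta_n(q):=\mathbb P_q\bigl(w\leftrightarrow\partial B_n(w)\bigr)$ on the graph ball $B_n(w)$. Russo's formula for Bernoulli site percolation writes $\theta_n'(q)$ as the expected number of vertices pivotal for $\{w\leftrightarrow\partial B_n(w)\}$; restricting to the sub-event where the connection fails, revealing the cluster $\mathcal C$ of $w$, and identifying the pivotal vertices as the closed vertices on the outer boundary of $\mathcal C$ through which a reconnection would run, a conditioning/FKG argument in the spirit of Duminil-Copin--Tassion yields
\[
\theta_n'(q)\ \ge\ \frac{1-\theta_n(q)}{1-q}\,\inf_{\substack{S\ni w\\ S\ \text{finite}}}\varphi^w_q(S)\ \ge\ \frac{(1-\epsilon_1)\bigl(1-\theta_n(q)\bigr)}{1-q},\qquad p_1\le q\le p,
\]
the last step because $\varphi^w_q(S)>1-\epsilon_1$ when $w\in S^\circ$ and $\varphi^w_q(S)=1$ otherwise. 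Integrating from $p_1$ to $p$ gives $1-\theta_n(p)\le\bigl(1-\theta_n(p_1)\bigr)\bigl(\tfrac{1-p}{1-p_1}\bigr)^{1-\epsilon_1}\le\bigl(\tfrac{1-p}{1-p_1}\bigr)^{1-\epsilon_1}$, and letting $n\to\infty$ (so $\theta_n(p)\downarrow\mathbb P_p(w\leftrightarrow\infty)$) yields the first inequality in \eqref{lbc}; the second follows from \eqref{pepe}. Since that right side is strictly positive, $\mathbb P_p(w\leftrightarrow\infty)>0$, and as $\{\exists\text{ an infinite open cluster}\}$ is a tail event, Kolmogorov's $0$--$1$ law makes it almost sure. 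This gives (1) and $p_c^{\mathrm{site}}(G)\le\tilde p_c$, completing the identification.

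The main obstacle is the differential inequality on the supercritical side. One must organize the cluster exploration so that the complement of the revealed region is still fresh i.i.d.\ Bernoulli($q$) noise, and then match the expected number of reconnection-pivotal vertices to the functional $\varphi^w_q(S)$, whose summation runs over the \emph{neighbourhoods} $\partial_V y$ of boundary vertices rather than over boundary edges; handling this bookkeeping, the FKG step producing the factor $1-\theta_n(q)$, and the $(1-q)^{-1}$ coming from the site (rather than bond) form of Russo's formula, is where essentially all the work sits. By contrast, the subcritical side, the parameter choice in \eqref{pepe}, and the passages to the limit are routine.
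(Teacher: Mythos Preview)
The paper does not supply a proof of this lemma: it is quoted verbatim as Lemma~2.5 of \cite{ZL24} (with the header ``The following technical lemmas were proved in \cite{ZL24}; see also \cite{ZL231}''), so there is no in-paper argument to compare against. Your proposal is the expected Duminil-Copin--Tassion style proof that the functional $\varphi^v_p(S)$ is designed for, and the two halves are set up correctly: the subcritical contraction $\mathbb P_p(v\leftrightarrow\infty)\le (1-\epsilon_0)\,M$ via BK (or, equivalently, via exploring the cluster of $v$ inside $S_v^\circ$) is sound, and the supercritical side is the right differential-inequality route, with the integration from $p_1$ to $p$ and the limit $n\to\infty$ giving exactly \eqref{lbc}. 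Your identification of the delicate step---matching the expected number of closed pivotals on $\{w\nleftrightarrow\partial B_n\}$ to $\varphi^w_q(\cdot)$ evaluated on the random explored set, and handling the case $w\notin S^\circ$ via $\varphi^w_q(S)=1$---is accurate; this is indeed where the bookkeeping specific to the site model and to the $\partial_V y$ summation sits. One small technicality you should make explicit is that BK is applied to the finite-range events $\{y\leftrightarrow\partial B_N\}$ before sending $N\to\infty$, but this is routine.
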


\subsection{Packing definition and quantitative upper bounds}

\begin{definition}Let $G=(V,E)$ be a graph. Let $D\geq 1$ be a positive integer. For each $v\in V$, let
\begin{align*}
B(v,D):=\{u\in V:d_{G}(v,u)\leq D\}.
\end{align*}
and
\begin{align*}
\partial_{in}B(v,D):=\{u\in B(v,D):\exists w\notin B(v,D),\ s.t.~(v,w)\in E\}.
\end{align*}
\end{definition}

\begin{definition}\label{df33}Let $G=(V,E)$ be an infinite, connected, locally finite graph. Let $S\subset V$ be a (finite or infinite) set of vertices. For $p,\epsilon,c\in(0,1)$, we define the $(p,\epsilon,c)$-packing number $\mathbf{PK}_{p,\epsilon,c}(S)$ to be the maximal cardinality of subsets 
\begin{align*}
V_S:=\{w_1,w_2,\ldots\}\subset S
\end{align*}
satisfying all the following conditions: 
\begin{enumerate}
\item for every $w_i\in V_S$, there exists $D_i>0$, such that 
\begin{align*}
w_i\in V\setminus[\cup_{j=1}^{i-1}B(w_j,D_j)]
\end{align*}
\item Let $G_i:=G\setminus [\cup_{j=1}^{i}B(w_j,D_j)]$; with $G_0:=G$, such that
\begin{align}
\PP_p(w_i\nleftrightarrow \partial_{in}B(w_i,D_i);G_{i-1})\le\PP_p(w_i\nleftrightarrow \infty;G_{i-1})\leq \PP_p(w_i\nleftrightarrow \partial_{in}B(w_i,D_i);G_{i-1})(1+\epsilon)\label{ctd}
\end{align}
and
\begin{align}
\PP_p(w_i\leftrightarrow\infty;G_{i-1})\geq c>0\label{wil};
\end{align}
\end{enumerate}
where $\{w_i\leftrightarrow\infty;G_{i-1}\}$ is the event that $w_i$ is in an infinite-1 cluster in the Bernoulli site percolation of the graph $G_{i-1}$.
\end{definition}

In words, $\mathbf{PK}_{p,\varepsilon,c}(S)$ is the largest number of vertices that can be chosen
recursively from $S$ so that, after removing the witness balls already used, each selected
vertex still has a uniform probability at least $c$ of connecting to infinity, and the global
event that this vertex does not connect to infinity is already captured, up to the factor
$1+\varepsilon$, by the local event that it does not reach the inner boundary of its witness
ball. Thus the packing number counts how many essentially disjoint local witnesses for
the global event $\{S \nleftrightarrow \infty\}$ can be extracted from $S$.

\begin{proposition}\label{p24} (Immediate bounds and non-vacuity of the packing number).
Let
\[
H_c:=\{w\in V: P_p(w\leftrightarrow\infty)\ge c\}.
\]
Then for every $S\subset V$,
\[
\mathbf 1_{\{S\cap H_c\neq\varnothing\}}
\le \mathbf{PK}_{p,\varepsilon,c}(S)
\le |S\cap H_c|.
\]
\end{proposition}

\begin{proof}
For the upper bound, let $\{w_1,w_2,\dots\}$ be an admissible family in Definition \ref{df33}.
Then for every $i$,
\[
P_p(w_i\leftrightarrow\infty;G)\ge P_p(w_i\leftrightarrow\infty;G_{i-1})\ge c,
\]
hence $w_i\in H_c$. Therefore the whole packing family is contained in $S\cap H_c$, and
\[
\mathbf{PK}_{p,\varepsilon,c}(S)\le |S\cap H_c|.
\]

For the lower bound, assume $w\in S\cap H_c$. Since
\[
\{w\nleftrightarrow \partial_{\mathrm{in}} B(w,D)\}\uparrow \{w\nleftrightarrow \infty\}
\quad\text{as }D\to\infty,
\]
there exists $D$ large enough such that
\[
P_p(w\nleftrightarrow \partial_{\mathrm{in}} B(w,D))
\le P_p(w\nleftrightarrow \infty)
\le (1+\varepsilon)P_p(w\nleftrightarrow \partial_{\mathrm{in}} B(w,D)).
\]
Since also $P_p(w\leftrightarrow\infty)\ge c$, the singleton $\{w\}$ is admissible in
Definition \ref{df33}. Hence $\mathbf{PK}_{p,\varepsilon,c}(S)\ge 1$.
\end{proof}

\begin{remark}[Role of Lemma \ref{l71} in the packing construction]
Lemma \ref{l71} identifies a concrete source of vertices with a uniform lower bound on the
supercritical connection probability. More precisely, if $p>p_c^{\rm site}(G)$,
$p_1\in (p_c^{\rm site}(G),p)$, and $w$ satisfies the uniform functional condition in
(\ref{wc1}), then (\ref{lbc}) yields
\[
P_p(w\leftrightarrow\infty)\ge 1-\Bigl(\frac{1-p}{1-p_1}\Bigr)^{1-\varepsilon_1}.
\]
Hence Lemma \ref{l71} furnishes the vertices that may serve as centers in the packing
construction.

Furthermore, in settings where the argument of Lemma 3.3 in \cite{ZL231} applies, one
obtains infinitely many vertices satisfying the corresponding strong local functional
condition; in particular, there are infinitely many vertices enjoying the above global lower
bound. Thus, in those settings, the packing construction can draw from an infinite
reservoir of candidate centers.
\end{remark}

\begin{lemma}\label{lm34}Let $G=(V,E)$ be an infinite, connected, locally finite graph. Let $S\subset V$. Then
\begin{align*}
\mathbb{P}_p(S\nleftrightarrow \infty)\leq \frac{(1-c)\epsilon}{c}+(1-c)^{\mathbf{PK}_{p,\epsilon,c}(S)}
\end{align*}
\end{lemma}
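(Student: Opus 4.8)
\medskip\noindent\textbf{Proof plan.}
The plan is to realise all the $\PP_p$-percolations on the punctured graphs $G_i=G\setminus\bigcup_{j\le i}B(w_j,D_j)$ of Definition~\ref{df33} on a single probability space and run a one-pass conditioning argument. Write $N:=\mathbf{PK}_{p,\epsilon,c}(S)$ and fix a packing $V_S=\{w_1,\dots,w_N\}\subseteq S$ realising it, with radii $D_i$; set $B_i:=B(w_i,D_i)$ and $G_0:=G$. Since any initial segment $\{w_1,\dots,w_M\}$ of a packing is again a packing (the conditions for an index $i\le M$ involve only $w_1,\dots,w_M$, $B_1,\dots,B_M$ and $G_0,\dots,G_{M-1}$), it suffices to treat finite $N$ and let $N\to\infty$ afterwards to cover $\mathbf{PK}_{p,\epsilon,c}(S)=\infty$. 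I would use the monotone coupling: let $(U_v)_{v\in V}$ be i.i.d.~Uniform$[0,1]$ and call $v$ open in $G_i$ iff $v\in G_i$ and $U_v\le p$; then $H\subseteq H'$ forces $\{v\nleftrightarrow\infty;H'\}\subseteq\{v\nleftrightarrow\infty;H\}$ pointwise. As every $w_i\in S$ and $G_{i-1}\subseteq G$, this gives
\[
\{S\nleftrightarrow\infty\}\ \subseteq\ \bigcap_{i=1}^{N}\{w_i\nleftrightarrow\infty;G\}\ \subseteq\ \bigcap_{i=1}^{N}\{w_i\nleftrightarrow\infty;G_{i-1}\}\ =:\ \bigcap_{i=1}^{N}A_i ,
\]
and the task reduces to bounding $\PP_p\!\bigl(\bigcap_{i=1}^N A_i\bigr)$.

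Next I would peel off one ball at a time. Fix $k$ and set $\mathcal F_k:=\sigma\bigl((U_v)_{v\notin B_k}\bigr)$. For $j>k$ we have $B_k\subseteq\bigcup_{l<j}B_l$, hence the percolation event $A_j$ on $G_{j-1}$ is $\mathcal F_k$-measurable, so conditioning on $\mathcal F_k$ freezes $A_{k+1},\dots,A_N$. Put $E_k:=\{w_k\nleftrightarrow\partial_{in}B_k;\,G_{k-1}\}$ and $r_k:=\PP_p(E_k)$. Because every vertex of $B_k\setminus\partial_{in}B_k$ has all its $G$-neighbours inside $B_k$, the exploration of the $G_{k-1}$-cluster of $w_k$ only inspects vertices of $B_k$ until it first meets $\partial_{in}B_k$; thus $E_k$ is a function of $(U_v)_{v\in B_k}$ alone, so $E_k$ is independent of $\mathcal F_k$, and on $E_k$ that cluster lies inside the finite set $B_k$, whence $E_k\subseteq A_k$. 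Decomposing $A_k=E_k\sqcup(A_k\setminus E_k)$, conditioning on $\mathcal F_k$, and crudely bounding the indicator of $\bigcap_{i>k}A_i$ by $1$ in the remainder term, I get the recursion $P_k\le r_k\,P_{k+1}+\PP_p(A_k\setminus E_k)$ for $P_k:=\PP_p\!\bigl(\bigcap_{i=k}^{N}A_i\bigr)$, with $P_{N+1}:=1$.

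It then remains to feed in the quantitative content of Definition~\ref{df33}. By \eqref{ctd} (in $G_{k-1}$), $r_k\le\PP_p(A_k)\le(1+\epsilon)r_k$, hence $\PP_p(A_k\setminus E_k)=\PP_p(A_k)-r_k\le\epsilon r_k$; and by \eqref{wil}, $\PP_p(A_k)=1-\PP_p(w_k\leftrightarrow\infty;G_{k-1})\le 1-c$, hence $r_k\le 1-c$ and $\PP_p(A_k\setminus E_k)\le\epsilon(1-c)$. Therefore $P_k\le(1-c)P_{k+1}+\epsilon(1-c)$, and unrolling from $P_{N+1}=1$ gives
\[
\PP_p(S\nleftrightarrow\infty)\le P_1\le(1-c)^N+\epsilon(1-c)\sum_{j=0}^{N-1}(1-c)^j\le(1-c)^N+\frac{\epsilon(1-c)}{c}\le(1+\epsilon)(1-c)^N+\frac{(1-c)\epsilon}{c},
\]
which is the asserted inequality.

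The step I expect to take the most care is the locality of $E_k$: when an earlier ball overlaps $B_k$ the graph $G_{k-1}$ genuinely cuts into $B_k$, yet $\{w_k\nleftrightarrow\partial_{in}B_k\}$ still reads off only the configuration inside $B_k$, because in any subgraph of $G$ the set $\partial_{in}B_k$ separates $w_k$ from $V\setminus B_k$. This locality --- together with the nesting $G_N\subseteq\cdots\subseteq G_0=G$ built into Definition~\ref{df33}, which is precisely what leaves only the \emph{later} events $A_{k+1},\dots,A_N$ measurable after each conditioning step --- is what lets the factor $r_k\le 1-c$ be extracted at every stage and the recursion close; everything else is the routine linear-recursion bookkeeping displayed above.
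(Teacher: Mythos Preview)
Your argument is correct and uses the same ingredients as the paper --- the local events $E_k=\{w_k\nleftrightarrow\partial_{in}B_k;G_{k-1}\}$, the inclusion $E_k\subseteq A_k$, the bound $\PP_p(A_k\setminus E_k)\le\epsilon\,\PP_p(E_k)$ from \eqref{ctd}, and the disjoint-support independence coming from the nested deletions in Definition~\ref{df33} --- but organises them differently. The paper runs a \emph{forward} induction on $B_k:=\bigcap_{j\le k}A_j$, tracking simultaneously $\PP_p(B_k)$ and $\PP_p(B_{k,\mathbf D_k})=\prod_{j\le k}\PP_p(E_j)$ through a three-term decomposition and the full mutual independence of $\{E_j\}_{j\le k}$; this yields the coupled recursions \eqref{abA}--\eqref{ab8} and an auxiliary bound on the gap $\PP_p(B_k)-\PP_p(B_{k,\mathbf D_k})$. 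Your \emph{backward} recursion on $P_k=\PP_p\bigl(\bigcap_{i\ge k}A_i\bigr)$ needs only that each $E_k$ is independent of the \emph{later} events $A_{k+1},\dots,A_N$ (implied by, but weaker than, full mutual independence), and closes in the single line $P_k\le(1-c)P_{k+1}+\epsilon(1-c)$. This is a bit cleaner and in fact gives the marginally sharper bound $(1-c)^N+\epsilon(1-c)/c$ before you weaken it to match the stated inequality; the paper's extra factor $(1+\epsilon)$ on $(1-c)^N$ arises from carrying the full product $\prod_j\PP_p(E_j)$ through the three-way split.
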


\begin{proof}

When $\mathbf{PK}_{p,\epsilon,c}(S)=1$,
the conclusion of the lemma follows from (\ref{wil}).

Let $k\geq 2$. For $1\leq i\leq k$, define
\begin{align*}
A_i:=\{w_i\nleftrightarrow\infty;G_{i-1}\};\qquad
B_i:=\cap_{j=1}^{i}A_j.
\end{align*}
and
\begin{align*}
A_{i,D_i}:=\{w_i\nleftrightarrow \partial_{in}B(w_i,D_i);G_{i-1}\};\qquad B_{i,\mathbf{D}_i}=\cap_{j=1}^{i}A_{j,D_j}.
\end{align*}
where
\begin{align*}
\mathbf{D}_i=\{D_1,\ldots,D_i\}
\end{align*}

We have
\begin{align}
\mathbb{P}_p(B_k)&=\PP_p(A_k\cap B_{k-1})\label{ab1}\\
&=\PP_p(A_{k,{D_k}}\cap B_{k-1,\mathbf{D}_{k-1}})+\PP_p(A_{k}\cap A_{k,D_k}^c\cap B_{k-1,\mathbf{D}_{k-1}})+\PP_p(A_{k}\cap B_{k-1}\cap B_{k-1,\mathbf{D}_{k-1}}^c)\notag
\end{align}
Note that $\{A_{i,D_i}\}_{i=1}^k$ are mutually independent, therefore
\begin{align}
\PP_p(A_{k,D_k}\cap B_{k-1,\mathbf{D}_{k-1}})= \prod_{i=1}^k\PP_p(A_{i,D_i})\leq(1-c)^k\label{ab2}
\end{align}
where the last inequality follows from (\ref{ctd}), (\ref{wil}).
Moreover by independence we have
\begin{align}
\PP_p(A_{k}\cap A_{k,{D_k}}^c\cap B_{k-1,\mathbf{D}_{k-1}})=
\PP_p(B_{k-1,\mathbf{D}_{k-1}})\PP_p(A_{k}\cap A_{k,D_k}^c)\label{ab3}
\end{align}
By (\ref{ctd}), we have
\begin{align}
\PP_p(A_{k}\cap A_{k,D_k}^c)\leq \epsilon \PP_p(A_{k,D_k})\label{ab4}
\end{align}

It follows from (\ref{ab3})-(\ref{ab4}) that
\begin{align}
\PP_p(A_{k}\cap A_{k,D_k}^c\cap B_{k-1,\mathbf{D}_{k-1}})\leq \epsilon \PP_p(B_{k-1,\mathbf{D}_{k-1}})\PP_p(A_{k,D_k})=\epsilon \prod_{j=1}^k\PP_p(A_{j,D_j})\le\epsilon(1-c)^k\label{ab6}
\end{align}
We also have 
\begin{align}
&\PP_p(A_{k}\cap B_{k-1}\cap B_{k-1,\mathbf{D}_{k-1}}^c)\label{ab7}\le
\PP_p(B_{k-1}\cap B_{k-1,\mathbf{D}_{k-1}}^c)
\end{align}
By (\ref{ab1}), (\ref{ab2}), (\ref{ab6}), (\ref{ab7}) we have
\begin{align}
\PP_p(B_k)\leq (1+\epsilon)\PP_p(B_{k,\mathbf{D}_{k}})+\PP_p(B_{k-1}\cap B_{k-1,\mathbf{D}_{k-1}}^c),\label{abA}
\end{align}
so
\begin{align}
\PP_p(B_k)-\PP_p(B_{k,\mathbf{D}_k})\leq \epsilon \PP_p(B_{k,\mathbf{D}_{k}})+\PP_p(B_{k-1}\cap B_{k-1,\mathbf{D}_{k-1}}^c)\label{ab8}
\end{align}
For $i\geq 1$, if we write
\begin{align*}
Q_i:=\PP_p(B_i)-\PP_p(B_{i,\mathbf{D}_i})=\PP_p(B_{i}\cap B_{i,\mathbf{D}_{i}}^c)
\end{align*}
Then (\ref{ab8}) and (\ref{ab2}) gives the following recursive inequality
\begin{align}
\label{abb}
Q_k\leq \epsilon (1-c)^k+ Q_{k-1}
\end{align}
By (\ref{ctd}) we have
\begin{align}
\label{abc}
Q_1\leq \epsilon (1-c)
\end{align}

By (\ref{abb}) and (\ref{abc}) we infer that
\begin{align}
\PP_p(B_k)-\PP_p(B_{k,\mathbf{D}_k})\leq \epsilon \left[\sum_{i=1}^k (1-c)^i\right]= \frac{\epsilon[(1-c)-(1-c)^{k+1}]}{c}\label{ab9}
\end{align}
for all $k\geq 1$.

Then by (\ref{abA}) we have
\begin{align}
\PP_p(B_k)\leq (1+\epsilon)(1-c)^k+\frac{\epsilon(1-c)}{c}-\frac{\epsilon(1-c)^k}{c}\label{pdk}
=(1-c)^k+\frac{\epsilon(1-c)}{c}\left[1-(1-c)^k\right]
\end{align}
Note that
\begin{align*}
\PP_p(S\nleftrightarrow\infty)\leq \PP_p(B_k),
\end{align*}
then the lemma follows.
\end{proof}

\begin{corollary}[Explicit supercritical disconnection bound]
\label{cor:explicit-main}
Let $G=(V,E)$ be an infinite, connected, locally finite graph, let
$p>p^{\mathrm{site}}_c(G)$, and let $S\subset V$ be any finite or infinite set of vertices.
Then
\[
\mathbb{P}_p(S\nleftrightarrow\infty)
\le
\inf_{\substack{p_1\in(p^{\mathrm{site}}_c(G),\,p)\\ \eps,\delta\in(0,1)}}
\left[
\frac{\delta\left(\frac{1-p}{1-p_1}\right)^{1-\eps}}
{1-\left(\frac{1-p}{1-p_1}\right)^{1-\eps}}
+\left(\frac{1-p}{1-p_1}\right)^{
(1-\eps)\mathbf{PK}_{p,\delta,\,
1-\left(\frac{1-p}{1-p_1}\right)^{1-\eps}}(S)}
\right].
\]
\end{corollary}

\begin{proof}
Apply Lemma~\ref{lm34} with
\[
c:=1-\left(\frac{1-p}{1-p_1}\right)^{1-\eps}.
\]
Lemma~\ref{l71} guarantees that such choices yield a nontrivial lower bound on one-point
connection probabilities in the supercritical regime.
\end{proof}

\section{Examples on ray-homogeneous trees}\label{sec3}

This section illustrates the packing number on a concrete class of trees. We first show
that on any ray-homogeneous tree, sparse finite subsets of a distinguished ray have
packing number equal to their cardinality. We then record two concrete instances:
regular trees and a non-regular decorated spine. These examples show that the packing
construction is explicit and not restricted to transitive graphs.

\begin{definition}[Ray-homogeneous tree along a ray]
\label{def:ray-homogeneous}
Let $T$ be an infinite, locally finite tree, and let
\[
\gamma=(y_0,y_1,y_2,\dots)
\]
be a fixed geodesic ray in $T$. For each $n\ge 1$, let $T_n^+$ denote the connected
component of $y_n$ in the graph obtained from $T$ by deleting the edge
$\langle y_{n-1},y_n\rangle$.

We say that $T$ is \emph{ray-homogeneous along $\gamma$} if there exist rooted trees
$(\mathbb F_0,o_0)$ and $(\mathbb F_1,o_1)$ such that
\begin{enumerate}
\item for every $n\ge 0$,
\[
(T,y_n)\cong (\mathbb F_0,o_0),
\]
\item for every $n\ge 1$,
\[
(T_n^+,y_n)\cong (\mathbb F_1,o_1).
\]
\end{enumerate}
\end{definition}

\begin{remark}
\label{rem:ray-homogeneous}
The point of Definition~\ref{def:ray-homogeneous} is that the first selected vertex in the
packing construction sees the rooted model $\mathbb F_0$, while every later selected vertex,
after the preceding witness ball cuts the ray just before it, sees the rooted model
$\mathbb F_1$. This is exactly the geometric input needed in the proof below.
\end{remark}

Suppose now that $T$ is ray-homogeneous along $\gamma$ in the sense of
Definition~\ref{def:ray-homogeneous}. Let $p\in(0,1)$. For $j\in\{0,1\}$ and $R\ge 1$, write
\[
a_j(R):=\PP_p^{\mathbb F_j}\bigl(o_j\nleftrightarrow \partial_{\mathrm{in}}B_{\mathbb F_j}(o_j,R)\bigr),
\qquad
a_j:=\PP_p^{\mathbb F_j}(o_j\nleftrightarrow \infty),
\]
and
\[
\theta_j:=1-a_j=\PP_p^{\mathbb F_j}(o_j\leftrightarrow \infty).
\]
Since
\[
\{o_j\nleftrightarrow \partial_{\mathrm{in}}B_{\mathbb F_j}(o_j,R)\}\uparrow
\{o_j\nleftrightarrow \infty\}
\qquad\text{as }R\to\infty,
\]
we have
\[
a_j(R)\uparrow a_j
\qquad\text{as }R\to\infty.
\]

\begin{proposition}[Exact packing on sparse subsets of a ray]
\label{prop:ray-homogeneous-packing}
Let $T$ be ray-homogeneous along a ray
\[
\gamma=(y_0,y_1,y_2,\dots),
\]
with associated rooted trees $(\mathbb F_0,o_0)$ and $(\mathbb F_1,o_1)$.
Assume that
\[
p>\max\bigl\{p_c^{\rm site}(\mathbb F_0),\, p_c^{\rm site}(\mathbb F_1)\bigr\},
\]
and let $\varepsilon\in(0,1)$.

Choose an integer $R_\varepsilon\ge 1$ such that
\begin{equation}
\label{eq:R-eps-choice-0}
a_0\le (1+\varepsilon)a_0(R_\varepsilon)
\end{equation}
and
\begin{equation}
\label{eq:R-eps-choice-1}
a_1\le (1+\varepsilon)a_1(R_\varepsilon).
\end{equation}

Let
\[
A=\{y_{n_1},y_{n_2},\dots,y_{n_k}\}\subset \gamma
\]
with
\[
0\le n_1<n_2<\cdots<n_k
\]
and assume that
\begin{equation}
\label{eq:ray-gap}
n_{i+1}-n_i\ge R_\varepsilon+1
\qquad\text{for all }1\le i\le k-1.
\end{equation}
Then for every
\[
c\in \bigl(0,\min\{\theta_0,\theta_1\}\bigr],
\]
one has
\[
\mathbf{PK}_{p,\varepsilon,c}(A)=|A|=k.
\]
More generally, if $S\subset V(T)$ contains $A$, then
\[
\mathbf{PK}_{p,\varepsilon,c}(S)\ge |A|.
\]
\end{proposition}

\begin{proof}
Let
\[
w_i:=y_{n_i},
\qquad 1\le i\le k.
\]
For $1\le i\le k-1$, define
\[
D_i:=n_{i+1}-n_i-1,
\]
and define
\[
D_k:=R_\varepsilon.
\]
By \eqref{eq:ray-gap}, we have
\[
D_i\ge R_\varepsilon
\qquad\text{for all }1\le i\le k.
\]

We claim that the ordered family
\[
(w_1,D_1),\dots,(w_k,D_k)
\]
is admissible in the sense of Definition~\ref{df33}.

\medskip

\noindent
\textit{Step 1: verification of condition \textnormal{(1)} in Definition~\ref{df33}.}

Fix $i\in\{1,\dots,k\}$. If $j<i$, then
\[
d_T(w_j,w_i)=n_i-n_j.
\]
Since $n_i\ge n_{j+1}$ for every $j<i$, we get
\[
n_i-n_j\ge n_{j+1}-n_j > n_{j+1}-n_j-1 = D_j.
\]
Hence
\[
w_i\notin B(w_j,D_j)
\qquad\text{for every }j<i.
\]
Therefore
\[
w_i\in V(T)\setminus \bigcup_{j=1}^{i-1} B(w_j,D_j),
\]
which is exactly condition \textnormal{(1)}.

\medskip

\noindent
\textit{Step 2: the first vertex $w_1$.}

Since $(T,w_1)\cong (\mathbb F_0,o_0)$ by Definition~\ref{def:ray-homogeneous}, we have
\[
\PP_p(w_1\nleftrightarrow \infty;T)=a_0
\]
and
\[
\PP_p\bigl(w_1\nleftrightarrow \partial_{\mathrm{in}}B(w_1,D_1);T\bigr)=a_0(D_1).
\]
Because $D_1\ge R_\varepsilon$ and $a_0(R)$ is increasing in $R$,
\[
a_0(R_\varepsilon)\le a_0(D_1)\le a_0.
\]
Using \eqref{eq:R-eps-choice-0}, we obtain
\[
\PP_p\bigl(w_1\nleftrightarrow \partial_{\mathrm{in}}B(w_1,D_1);T\bigr)
\le
\PP_p(w_1\nleftrightarrow \infty;T)
\le
(1+\varepsilon)\,
\PP_p\bigl(w_1\nleftrightarrow \partial_{\mathrm{in}}B(w_1,D_1);T\bigr).
\]
Moreover,
\[
\PP_p(w_1\leftrightarrow \infty;T)=\theta_0\ge c.
\]
Thus conditions (\ref{ctd}) and (\ref{wil}) hold for $i=1$.

\medskip

\noindent
\textit{Step 3: the later vertices $w_i$, $2\le i\le k$.}

Fix $i\in\{2,\dots,k\}$. Let
\[
G_{i-1}:=T\setminus \bigcup_{j=1}^{i-1} B(w_j,D_j).
\]
We claim that the connected component of $w_i$ in $G_{i-1}$ is exactly $T_{n_i}^+$.

Indeed, since
\[
D_{i-1}=n_i-n_{i-1}-1,
\]
the ball $B(w_{i-1},D_{i-1})$ contains the ray vertices
\[
y_{n_{i-1}},y_{n_{i-1}+1},\dots,y_{n_i-1},
\]
but does not contain $y_{n_i}=w_i$. Therefore the predecessor $y_{n_i-1}$ of $w_i$ along
the ray $\gamma$ has been removed, while $w_i$ itself has not. On the other hand, if
$j\le i-2$, then every vertex of $T_{n_i}^+$ is at distance at least $n_i-n_j>D_j$ from
$w_j$, so the earlier balls $B(w_j,D_j)$ do not meet $T_{n_i}^+$. Hence the connected
component of $w_i$ in $G_{i-1}$ is precisely $T_{n_i}^+$.

By Definition~\ref{def:ray-homogeneous}, this component is rooted-isomorphic to
$(\mathbb F_1,o_1)$. Therefore
\[
\PP_p(w_i\nleftrightarrow \infty;G_{i-1})=a_1
\]
and
\[
\PP_p\bigl(w_i\nleftrightarrow \partial_{\mathrm{in}}B(w_i,D_i);G_{i-1}\bigr)=a_1(D_i).
\]
Since $D_i\ge R_\varepsilon$,
\[
a_1(R_\varepsilon)\le a_1(D_i)\le a_1.
\]
Using \eqref{eq:R-eps-choice-1}, we get
\[
\PP_p\bigl(w_i\nleftrightarrow \partial_{\mathrm{in}}B(w_i,D_i);G_{i-1}\bigr)
\le
\PP_p(w_i\nleftrightarrow \infty;G_{i-1})
\le
(1+\varepsilon)\,
\PP_p\bigl(w_i\nleftrightarrow \partial_{\mathrm{in}}B(w_i,D_i);G_{i-1}\bigr),
\]
and also
\[
\PP_p(w_i\leftrightarrow \infty;G_{i-1})=\theta_1\ge c.
\]
Thus conditions (\ref{ctd}) and (\ref{wil}) hold for every $2\le i\le k$.

We have proved that the family $\{w_1,\dots,w_k\}$ is admissible. Hence
\[
\mathbf{PK}_{p,\varepsilon,c}(A)\ge k.
\]

For the reverse inequality, $\mathbf{PK}_{p,\varepsilon,c}(A)\le |A|$, is straightforward from Definition \ref{df33}.

Therefore
\[
\mathbf{PK}_{p,\varepsilon,c}(A)=|A|=k.
\]

Finally, if $S\subset V(T)$ contains $A$, the same admissible family yields
\[
\mathbf{PK}_{p,\varepsilon,c}(S)\ge |A|.
\]
This completes the proof.
\end{proof}

\begin{corollary}[Regular trees]
\label{cor:regular-tree-special-case}
Let $T_d$ be the infinite $d$-regular tree, where $d\ge 3$, and let
\[
\gamma=(y_0,y_1,y_2,\dots)
\]
be a fixed geodesic ray. Then $T_d$ is ray-homogeneous along $\gamma$ in the sense of
Definition~\ref{def:ray-homogeneous}.

More precisely, one may take
\[
(\mathbb F_0,o_0)=(T_d,y_0),
\]
and $(\mathbb F_1,o_1)$ to be the rooted $(d-1)$-ary tree. For $p>1/(d-1)$, define
\[
u_0:=1-p,\qquad
u_{n+1}:=1-p+p\,u_n^{\,d-1},
\qquad
u:=\lim_{n\to\infty}u_n.
\]
Then
\[
a_0(R)=1-p+p\,u_{R-1}^{\,d},
\qquad
a_0=1-p+p\,u^d,
\]
and
\[
a_1(R)=1-p+p\,u_{R-1}^{\,d-1},
\qquad
a_1=1-p+p\,u^{d-1}.
\]
Equivalently,
\[
\theta_0=p(1-u^d),
\qquad
\theta_1=p(1-u^{d-1}).
\]

Hence, for every $\eps\in(0,1)$, there exists $R_\eps\ge 1$ such that for every finite set
\[
A=\{y_{n_1},\dots,y_{n_k}\}\subset \gamma
\]
satisfying
\[
n_{i+1}-n_i\ge R_\eps+1
\qquad\text{for all }1\le i\le k-1,
\]
one has
\[
\mathbf{PK}_{p,\eps,c}(A)=|A|
\qquad\text{for every }c\in(0,\theta_1].
\]
\end{corollary}

\begin{proof}
The ray-homogeneous statement is immediate from the symmetry of the regular tree.
The formulas for $a_0(R),a_0,a_1(R),a_1$ are the standard branching-process recursions
for site percolation on $T_d$. The conclusion follows from
Proposition~\ref{prop:ray-homogeneous-packing}, since $\theta_1\le \theta_0$.
\end{proof}

\begin{example}[A non-regular decorated spine]
\label{ex:decorated-spine}
Let $(\mathbb T_3,o)$ be the infinite rooted ternary tree, i.e. every vertex has exactly
three children. Let $L=(x_n)_{n\in\mathbb Z}$ be a bi-infinite path, and construct a tree
$T$ by attaching to each $x_n$ one copy of $\mathbb T_3$ via an edge joining $x_n$ to the
root of that copy. Let
\[
\gamma=(x_0,x_1,x_2,\dots)
\]
be the positive spine ray.

Then $T$ is ray-homogeneous along $\gamma$, but $T$ is not regular: the spine vertices
have degree $3$, whereas non-root vertices in the attached ternary trees have degree $4$.
If $(\mathbb F_0,o_0)$ and $(\mathbb F_1,o_1)$ denote the rooted models from
Definition~\ref{def:ray-homogeneous}, then
\[
p_c^{\rm site}(T)
=
p_c^{\rm site}(\mathbb F_0)
=
p_c^{\rm site}(\mathbb F_1)
=
\frac13.
\]

Consequently, for every $p>\frac13$ and every $\eps\in(0,1)$, there exists $R_\eps\ge 1$
such that every finite set
\[
A=\{x_{n_1},x_{n_2},\dots,x_{n_k}\}\subset \gamma
\]
satisfying
\[
n_{i+1}-n_i\ge R_\eps+1
\qquad\text{for all }1\le i\le k-1
\]
obeys
\[
\mathbf{PK}_{p,\eps,c}(A)=|A|
\qquad\text{for every }c\in\bigl(0,\min\{\theta_0,\theta_1\}\bigr].
\]
\end{example}

\begin{proof}
The ray-homogeneous property is immediate from translation along the spine.

To compute the critical threshold, first note that if $p>\frac13$, then the root of the copy
of $\mathbb T_3$ attached to $x_0$ has positive probability to connect to infinity inside that
copy. Hence $x_0$ connects to infinity in $T$ with positive probability, so
\[
p_c^{\rm site}(T)\le \frac13.
\]
The same argument applies to $\mathbb F_1$, since its root also carries one attached copy
of $\mathbb T_3$.

Conversely, if $p<\frac13$, then site percolation on $\mathbb T_3$ is subcritical, so every
open cluster inside a fixed attached copy of $\mathbb T_3$ is almost surely finite. Moreover,
the intersection of the open cluster of $x_0$ with the spine is dominated by the open cluster
of the origin for Bernoulli site percolation on $\mathbb Z$, which is almost surely finite for
every $p<1$. Hence only finitely many attached copies can be reached from $x_0$, and the
whole open cluster of $x_0$ in $T$ is almost surely finite. Therefore
\[
p_c^{\rm site}(T)\ge \frac13.
\]
Exactly the same argument on the one-sided spine gives
\[
p_c^{\rm site}(\mathbb F_1)\ge \frac13.
\]
Since $(\mathbb F_0,o_0)\cong (T,x_0)$, we also have
\[
p_c^{\rm site}(\mathbb F_0)=p_c^{\rm site}(T).
\]
Thus
\[
p_c^{\rm site}(T)
=
p_c^{\rm site}(\mathbb F_0)
=
p_c^{\rm site}(\mathbb F_1)
=
\frac13.
\]

The final packing statement now follows directly from
Proposition~\ref{prop:ray-homogeneous-packing}.
\end{proof}

\bibliographystyle{plain}
\bibliography{rf,psg}
\end{document}